\newtheorem{theorem}{Theorem}[section]
\newtheorem{definition}{Definition}[section]
\newtheorem{proposition}[theorem]{Proposition}
\newtheorem{example}{Example}[section]
\newtheorem{remark}{Remark}[section]
\newcommand{\eop }{ \hfill $\Box$ }
\begin{document}

\begin{center}
{\Large Renormalized-Generalized Solutions for the KPZ Equation \\}

\end{center}

\vspace{0.3cm}

\begin{center}
{\large Pedro Catuogno  and
Christian Olivera} \\

\textit{Departamento de
 Matem\'{a}tica, Universidade Estadual de Campinas, \\ F. 54(19) 3521-5921 � Fax 54(19)
3521-6094\\ 13.081-970 -
 Campinas - SP, Brazil. e-mail: pedrojc@ime.unicamp.br ; colivera@ime.unicamp.br}
\end{center}
\vspace{0.3cm}

\begin{center}
\begin{abstract}
\noindent This work introduces a new notion of solution for the
KPZ equation, in particular, our approach encompasses the Cole-Hopf solution. We 
set in the context of  the distribution theory the  proposed results by Bertini and Giacomin from the mid 90's. 

 \noindent This new approach  provides
 a pathwise notion of solution as well as  a  structured approximation theory. The developments are based on
 regularization arguments from the theory of distributions.
 \end{abstract}
\end{center}

\noindent {\bf Key words:} KPZ equation, Generalized functions, Generalized stochastic processes, Colombeau algebras.

\vspace{0.3cm} \noindent {\bf MSC2000 subject classification:}  46F99, 46F30, 60H15.

\section {Introduction}

\noindent Models for  interface growth have attracted much attention during the last two decades, in particular, these models
have  widespread application in many systems  ( see \cite{Bar} and  \cite{Mea}), such as film growth by vapor or
chemical deposition, bacterial growth, evolution of forest fire fronts, etc. For such systems,
a major effort has been the identification of the scaling regimes and their
classification into universality classes.  Phenomenological equations, selected according to symmetry principles and
conservation laws, are often able to reproduce various experimental data. Among these
phenomenological equations, the one introduced by M. Kardar, G. Parisi and Y. Zhang (\cite{KPZ}) has been
successful in describing properties of rough interfaces. This equation, the KPZ equation,  is also related to Burgers'
equation of turbulence and also directed polymers in random media (see   \cite{BG}).  The KPZ equation
describes the evolution of the profile of the interface, $h(x, t)$,  at position $x$ and time $t$:
\begin{equation}\label{KPZ}
 \left \{
\begin{array}{lll}
    \partial_th(t, x) =   \ \triangle h(t, x) +  \ (\partial_x h(t,x))^{2} + W(t,x)\\
 h(0, x) = f(x)
\end{array}
\right.
\end{equation}
where $W(t,x)$ is a space-time white noise. We refer to \cite{Hairer} for a more detailed historical account of the KPZ equation.

\noindent From a mathematical point of view,  equation (\ref{KPZ}) is ill-posed since the solutions are expected to look
locally like a Brownian motion. The term $  (\nabla h(t,x))^{2}$ cannot make sense in a classical way. The theory of distributions developed  by L. Schwartz  in the 1950s has certain insufficiencies, in particular, the absence of a
well defined multiplication of distributions (see for instance \cite{ober2} and  \cite{Schw2}).

\noindent L. Bertini and G. Giacomin in \cite{BG},  proposed that the correct solutions for the KPZ equation are obtained by taking the logarithm of a
solution of the stochastic heat equation with multiplicative noise. This is known as the Cole-Hopf solution for the KPZ equation.

\noindent The stochastic heat equation with multiplicative noise is the following It\^o equation,
\begin{equation}\label{heat}
 \left \{
\begin{array}{lll}
 dZ   =  \triangle Z \ dt  + Z dW \\
 Z(0, x) = e^{f(x)}.
\end{array}
\right .
 \end{equation}
It is well-posed and its solution is always positive. The evidence for the Cole-Hopf solution is overwhelming even though it satisfies, at a
purely formal level, the equation
\begin{equation}\label{KPZI}
 \left \{
\begin{array}{lll}
    \partial_t h(t, x) =   \ \triangle h(t, x) +  \ (\partial_x h(t,x))^{2} + W(t,x)  - \infty\\
 h(0, x) = f(x)
\end{array}
\right .
\end{equation}
where $\infty$  denotes an infinite constant.

\noindent The above remark indicates that a key problem for the theory of stochastic partial differential
equations is to find an appropriate definition of solution for the KPZ equation
 which, at the same time, incorporates the Cole-Hopf solution.

\noindent The present article constructs spaces of generalized stochastic processes that can be used to
renormalize  the divergent term appearing in (\ref{KPZ}). Moreover, we prove that
the Cole-Hopf solution becomes then a well defined solution to the KPZ equation.  At the level
 of  sequences, our  space of generalized stochastic processes  looks like  a Colombeau space  (see J. F. Colombeau \cite{colb})
and at the level of Schwartz distributions it looks like a generalized stochastic process space in the sense  of
It\^o-Gelfand-Vilenkin (see  \cite{Gel4}
  and \cite{ITO} ). A key ingredient in our study
is the  use of regularization techniques to define nonlinear operations
in the theory of distributions, we refer the reader to  \cite{Miku}, \cite{ober2} and  \cite{Trio} for the background material.

\noindent The article is organized as follows: Section 2 reviews some basic facts on the standard cylindrical Wiener process. Section 3 introduces  the spaces
of renormalized  semimartingales and renormalized distributions. Moreover, we give a notion of random generalized functions.
Section 4 introduces the new concept of  solution for the KPZ equation and proves existence of the solution which coincides with the Cole-Hopf solution.

\section{Cylindrical Brownian motion }

\noindent Let   $\{W(t,\cdot): t\in [0,T]\}$ be a
standard cylindrical Wiener process in $L^{2}(\mathbb{R})$; it is canonically realized as a family of
continuous processes satisfying:
\begin{enumerate}
\item For any $\varphi \in L^2( \mathbb{R})$, $\{W_t (\varphi), t \in [0,T] \}$ is a Brownian motion with variance $t\int\varphi^2(x)~dx$,
\item For any $\varphi_1,\varphi_2 \in L^2( \mathbb{R})$ and  $s, t \in [0,T]$,
\[
\mathbb{E}(W_{s}(\varphi_1)(W_{t}(\varphi_2))=(s \wedge t) \int \varphi_1(x) \varphi_2(x) ~dx.
\]
\end{enumerate}

\noindent Let $\{ \mathcal{F}_t : t \in [0,T]\}$ be the $\sigma$-field generated by the $P$-null sets and the random variables
$W_{s}(\varphi)$, where $\varphi\in \mathcal{D}( \mathbb{R})$ and  $s \in [0,t]$. The predictable $\sigma$-field is the $\sigma$-field in
$[0, T] \times \Omega$  generated by the sets $(s, t]\times A$ where $A\in\mathcal{F}_s$ and $0 \leq s < t \leq T$.

\noindent Let $\{v_j : j\in \mathbb{N} \}$ be a complete orthonormal basis of  $L^{2}(\mathbb{R})$. For any predictable
process $g \in L^{2}(\Omega \times [0, T ], L^2( \mathbb{R}) )$ it turns out that the following series is convergent in
$L^{2}(\Omega, \mathcal{F}, P)$ and the sum does not depend on the chosen orthonormal system:

\begin{equation}\label{inteCy}
\int_{0}^{T}  g_{t} W_{t}  := \sum_{j=1}^{\infty} \int_{0}^{T}  (g_t, v_{j}) dW_t(v_{j}).
\end{equation}

\noindent  We notice that each summand in the above series is a classical It\^o integral with respect to a standard
Brownian motion, and the resulting stochastic integral is a real-valued random variable. The independence of the terms in the series
(\ref{inteCy}) leads to the isometry property

\[
\mathbb{E}(| \int_{0}^{T}  g_{s} ~ dW_{s} |^{2})
   = \mathbb{E}( \int_{0}^{T}  \int | g_s(x) |^{2}~dx~ds).
\]

\noindent See   \cite{Da} and  \cite{Wals} for properties of the cylindrical Wiener motion and
stochastic integration.

\noindent In order to characterize the solution  of the KPZ equation through a procedure  of regularization, we introduce a
mollified version of the cylindrical Wiener process.  Let $\rho :  \mathbb{R} \rightarrow \mathbb{R}$ be an infinitely differentiable function with
compact support such that
$\int \rho(x) \ dx=1$. For all $n\in \mathbb{N}$ we consider the mollifier $\delta_{n}(x-y)=n \rho(n(x-y))$. The mollifier cylindrical Wiener process is defined by:
\begin{equation}
W_{t}^{n}(x):= W_{t}(\delta_{n}(x-\cdot)).
\end{equation}

\noindent It is a straightforward calculation to check that:

\begin{equation}\label{reguCy}
\mathbb{E}[W_{t}^{n}(x) W_{s}^{n}(y)]= s\wedge t  C_{n}(x-y)
\end{equation}
where $C_{n}(x)= \int \delta_{n}(x-u) \delta_{n}(-u) du$.

\noindent The quadratic variation of $W_{t}^{n}(x)$ is given by
\begin{equation}\label{reguCy1}
< W^{n}(x)>_t= Cnt
\end{equation}
where $C=\int \rho^2(-x)dx$.

\noindent We observe that for any $\varphi \in L^2(\mathbb{R})$,

\[
 \lim_{n \rightarrow \infty} \int W^n_t(x) \varphi(x)dx = W_t(\varphi).
\]
In the case that $\varphi$ has compact support the above convergence is a.e..


\section{Renormalized  Spaces}

\subsection{Renormalized  distributions }
\noindent We denote by $\mathcal{D}([0,T) \times \mathbb{R})$ the space of
the infinitely differentiable functions with
compact support in   $[0,T) \times \mathbb{R}$,  and
$\mathcal{D}^{\prime}([0,T) \times  \mathbb{R})$ its dual. In order to study the KPZ equation at level of distributions, we  introduce a
space of renormalized  distributions.

\begin{definition}\label{defeal} Let
\[
\mathcal{D}_{0}([0,T) \times \mathbb{R}):=\{ \varphi\in \mathcal{D}([0,T) \times \mathbb{R}) :  \varphi=\partial_x \psi \ with \ \psi \in
\mathcal{D}([0,T) \times \mathbb{R})  \}.
\]
\end{definition}

\begin{definition}
A continuous linear functional $\phi: \mathcal{D}_{0}([0,T) \times \mathbb{R}) \rightarrow \mathbb{R}$ is called a renormalized  distribution.
\end{definition}

\begin{remark} We observe that $\mathcal{D}_{0}([0,T) \times \mathbb{R})$ is equal to the set
\[
\{ \varphi\in \mathcal{D}([0,T) \times \mathbb{R}) : \int \varphi(t,x) \ dx=0   \}.
\]
\end{remark}

\noindent The relationship between the theory of distributions and
 probability theory comes from the seminal works of K. It\^o, I. Gelfand and N. Vilenkin,  see \cite{Gel4} and
\cite{ITO}. They gave many significant contributions to the  theory of stochastic process with values in a space of distributions.
This theory has been successful in  the resolution of stochastic partial differential equations driven by space-time white noise,
see for instance  \cite{Wals}. With the purpose to  give sense to the  KPZ equation, we consider random variables with values
in $\mathcal{D}_{0}^{\prime}([0,T) \times \mathbb{R})$. More precisely.

\begin{definition} Let $\mathbf{D}$ be the  space of functions $T :  \Omega \ \rightarrow \  \mathcal{D}_{0}^{\prime}([0,T) \times \mathbb{R})$ such that
$<T,\varphi> $ is  a  random   variable for all  $\varphi \in   \mathcal{D}_{0}([0,T) \times \mathbb{R})$.    \
The elements of $\mathbf{D}$ are called random generalized
functions.
\end{definition}

In order to define the initial condition for the KPZ equation, we need the notion of section of a distribution of S. Lojasiewicz, see
\cite{lo} and \cite{ober2}. We first introduce the space of test functions
\[
\mathcal{D}_{0}( \mathbb{R}):=\{ \varphi\in \mathcal{D}( \mathbb{R}) :  \varphi=\partial_x \psi \ with \ \psi \in
\mathcal{D}( \mathbb{R})  \}
\]
and its dual $\mathcal{D}_{0}^{\prime}(  \mathbb{R})$.

For convenience,  we recall the notion of strict delta net, see \cite{ober2} pp. 55.

\begin{definition}  A strict delta net is a net $\{  \rho_{\varepsilon} : \varepsilon >0 \}$ of $\mathcal{D}([0,T))$ such that it satisfies:
 \begin{enumerate}
\item
$\lim_{\varepsilon \rightarrow 0} supp(\rho_{\epsilon})= \{ 0 \}$.

\item For all $\varepsilon >0$,
 $\int \rho_{\varepsilon}(t)  \ dt =1$.
\item $\sup_{\varepsilon >0} \int |\rho^{\varepsilon}(t)| \ dt < \infty$.
\end{enumerate}
\end{definition}

\begin{definition} A distribution $H   \in  \mathcal{D}_{0}^{\prime}([0,T) \times \mathbb{R})$ has a section
$U \in \mathcal{D}_{0}^{\prime}(  \mathbb{R}) $ at $t=0$ if for all $\varphi \in \mathcal{D}_{0}( \mathbb{R})$ and
all strict delta net $\{  \rho_{\varepsilon} : \varepsilon >0 \}$,

\[
\lim_{\varepsilon \rightarrow 0}<H,  \rho_{\epsilon} \varphi > \ = \ <U,  \varphi >.
\]
\end{definition}

\subsection{Renormalized  Semimartingales }

\noindent We say that a random field $\{ S(t,x) : t \in [0,T],~x \in \mathbb{R} \}$ is a spatially dependent semimartingale if for
each $x\in \mathbb{R}$,
$\{S(t,x): t \in [0,T]\}$ is a semimartingale in relation to the same filtration $\{ \mathcal{F}_t : t \in [0,T] \}$. If
$S(t,x)$ is a $C^{\infty}$-function of $x$ and continuous in $t$ almost surely,  it is called  a $C^{\infty}$-semimartingale.
The vector space of $C^{\infty}$-semimartingales is denoted by $\mathbf{S}$. See \cite{Ku} for a rigorous study of spatially depend semimartingales
and applications to  stochastic differential equations.

\noindent In order to introduce the space of renormalized  semimartingales we consider
$\mathbf{S}^{\mathbb{N}_0}$ the space of sequences of $C^{\infty}$-semimartingales. It is clear that
$\mathbf{S}^{\mathbb{N}_0}$  has the structure of an associative,
commutative differential algebra with the natural operations:
\begin{eqnarray*}
(F_{n})+(G_{n})& = & (F_{n}+ G_{n}) \\
a(F_{n})& = & (aG_{n}) \\
(F_{n})\cdot (G_{n})& = & (F_{n} G_{n}) \\
\partial_x (F_{n})& = & (\partial_x F_{n})
\end{eqnarray*}
where $(F_{n})$ and $(G_{n})$ are in $\mathbf{S}^{\mathbb{N}_0}$ and $a \in
\mathbb{R}$.

\noindent We observe that
\[
\mathbf{N}=\{(G_{n})\in
 \mathbf{S}^{\mathbb{N}_0} \ : \ \mbox{for each $n \in
\mathbb{N}_0$,} \ G_{n} \ \mbox{   does not depend on the
variable } x~~  almost \ surely \}
\]
is a subalgebra
of $\mathbf{S}^{\mathbb{N}_0}$.

\begin{definition}
The space of  renormalized semimartingales  is defined as
\[
\mathbf{G}= \mathbf{S}^{\mathbb{N}_0} / \mathbf{N} .
\]
\end{definition}
Let $(F_n) \in \mathbf{G}$. We will use $[F_n]$ to
denote the equivalence class $(F_n) + \mathbf{N}$.

\begin{example}  The  mollified cylindrical Wiener process $\mathcal{W}:=[W^n]$ belongs to $\mathbf{G}$.
\end{example}

\begin{proposition}\label{deriva} Let $[F_n] \in \mathbf{G}$. Then

\[
\partial_x [F_n]=[\partial_x F_n].
\]

\end{proposition}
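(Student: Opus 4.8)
The plan is to show that the map $\partial_x$ is well-defined on the quotient $\mathbf{G}=\mathbf{S}^{\mathbb{N}_0}/\mathbf{N}$, which is the only content of the statement: the formula $\partial_x[F_n]=[\partial_x F_n]$ is by definition the way the derivative of an equivalence class is computed, provided the right-hand side does not depend on the representative chosen. So the entire proof reduces to checking that $\partial_x$ maps the ideal $\mathbf{N}$ into itself, i.e. that $\mathbf{N}$ is a differential ideal of $\mathbf{S}^{\mathbb{N}_0}$.

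First I would recall that $\partial_x$ acts termwise on sequences, $\partial_x(F_n)=(\partial_x F_n)$, and that this is one of the algebra operations already listed in the excerpt, so $(\partial_x F_n)$ is again an element of $\mathbf{S}^{\mathbb{N}_0}$ (each $\partial_x F_n$ is still a $C^\infty$-semimartingale, since differentiation in the space variable preserves smoothness in $x$, continuity in $t$, and the semimartingale property in $t$). Next I would take an arbitrary $(G_n)\in\mathbf{N}$, so that for each fixed $n$ the process $G_n(t,x)$ is, almost surely, independent of $x$. Then $\partial_x G_n(t,x)=0$ almost surely for every $(t,x)$, hence $(\partial_x G_n)=(0)\in\mathbf{N}$ (trivially, the zero process does not depend on $x$). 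This shows $\partial_x\mathbf{N}\subseteq\mathbf{N}$.

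Finally I would conclude: if $(F_n)$ and $(F_n')$ are two representatives of the same class $[F_n]$, then $(F_n-F_n')\in\mathbf{N}$, so by the previous step $(\partial_x F_n-\partial_x F_n')=\partial_x(F_n-F_n')\in\mathbf{N}$, i.e. $(\partial_x F_n)$ and $(\partial_x F_n')$ represent the same class in $\mathbf{G}$. Therefore the assignment $[F_n]\mapsto[\partial_x F_n]$ is unambiguous, and by definition this assignment \emph{is} $\partial_x[F_n]$, giving the claimed identity. The only point requiring a word of care — and the closest thing to an obstacle — is the measure-theoretic subtlety in passing from "$G_n$ does not depend on $x$ almost surely" to "$\partial_x G_n=0$ almost surely": one wants the exceptional null set to be uniform in $x$, which is fine here because $G_n$ is assumed to be a $C^\infty$-function of $x$, so on the single null set where $x\mapsto G_n(\cdot,x)$ fails to be constant one simply has nothing to prove, and off that set the $x$-derivative vanishes identically. \eop
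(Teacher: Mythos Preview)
Your proposal is correct and follows essentially the same approach as the paper: the paper's proof simply takes $(C_n)\in\mathbf{N}$ and observes that $\partial_x(F_n+C_n)=(\partial_x F_n+\partial_x C_n)=(\partial_x F_n)$, which is exactly your observation that $\partial_x G_n=0$ for $(G_n)\in\mathbf{N}$. Your version is more detailed (in particular the remark on the uniform null set), but the underlying argument is identical.
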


\begin{proof} Let $(C_n )\in \mathbf{N}$. Then

\[
\partial_x (F_n+C_n)=(\partial_x F_n+\partial_x C_n)= (\partial_x F_n).
\] \eop
\end{proof}

Now, we introduce the relation of association for renormalized semimartingales.

\begin{definition}\label{ass1} Let $[F_n]$ and $[G_n]$ be renormalized semimartingales. We say
that $[F_n]$ and $[G_n]$ are associated, (denoted by $ [F_n]
\approx [G_n]$) if for all $\varphi \in \mathcal{D}_{0}([0,T) \times \mathbb{R}) $
\[
\lim_{n\rightarrow\infty}<F_n-G_n,  \varphi>=0.
\]
\end{definition}
\noindent We observe that the relation $\approx$ is well defined.
In fact, if $(C_n)\in \mathbf{N}$ and $\varphi \in \mathcal{D}_{0}([0,T) \times \mathbb{R})$ we have that
$\lim_{n\rightarrow\infty}<C_n,  \varphi>=0$. It follows immediately
that $\approx$ is an equivalence relation.

\begin{proposition}\label{equias}
 Let $[F_n]$ and $[G_n]$ be renormalized semimartingales such that $ [F_n]
\approx [G_n]$. Then $ \partial_x^{\alpha}[F_n] \approx \partial_x^{\alpha}[G_n]$
for all $\alpha \in \mathbb{N}$.

\end{proposition}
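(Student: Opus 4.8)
The plan is to reduce the statement for general multi-indices $\alpha\in\mathbb{N}$ (here really iterated first-order spatial derivatives) to the case $\alpha=1$ by induction, and then to prove the base case by the standard integration-by-parts trick that moves the derivative off the semimartingale and onto the test function. First I would observe that $\partial_x$ is well defined on $\mathbf{G}$ by Proposition \ref{deriva}, so $\partial_x^{\alpha}[F_n]=[\partial_x^{\alpha}F_n]$ and it suffices to show: if $[F_n]\approx[G_n]$ then $[\partial_xF_n]\approx[\partial_xG_n]$; the full claim then follows by iterating $\alpha$ times, noting at each stage that the difference of the representatives is again a difference of $C^{\infty}$-semimartingales to which the base case applies.

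For the base case, let $H_n=F_n-G_n$, so by hypothesis $\lim_{n\to\infty}\langle H_n,\varphi\rangle=0$ for every $\varphi\in\mathcal{D}_0([0,T)\times\mathbb{R})$. Fix $\varphi\in\mathcal{D}_0([0,T)\times\mathbb{R})$; I want to show $\langle\partial_xH_n,\varphi\rangle\to0$. The key point is the pairing identity $\langle\partial_xH_n,\varphi\rangle=-\langle H_n,\partial_x\varphi\rangle$, which holds because $H_n(t,\cdot)$ is a $C^\infty$ function of $x$ (so ordinary integration by parts in $x$ applies, with no boundary terms since $\varphi$ has compact support) and $\varphi$ vanishes near $t=T$. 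Then I must check that $\partial_x\varphi$ is again an admissible test function, i.e. $\partial_x\varphi\in\mathcal{D}_0([0,T)\times\mathbb{R})$: this is immediate from the Remark characterizing $\mathcal{D}_0$ as the functions with $\int\varphi(t,x)\,dx=0$, since $\int\partial_x\varphi(t,x)\,dx=0$ automatically for any $\varphi\in\mathcal{D}([0,T)\times\mathbb{R})$ — in fact $\partial_x\varphi\in\mathcal{D}_0$ even without assuming $\varphi\in\mathcal{D}_0$. Hence $\langle\partial_xH_n,\varphi\rangle=-\langle H_n,\partial_x\varphi\rangle\to0$, which is exactly $\partial_x[F_n]\approx\partial_x[G_n]$.

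The induction step then consists of applying the base case to the pair $[\partial_x^{k}F_n]\approx[\partial_x^{k}G_n]$ to conclude $[\partial_x^{k+1}F_n]\approx[\partial_x^{k+1}G_n]$; here I would just remark that $\partial_x^kF_n$ and $\partial_x^kG_n$ are still $C^\infty$-semimartingales (differentiation in $x$ preserves $\mathbf{S}$, as used implicitly in the definition of the algebra structure), so the hypothesis of the base case is met at each stage.

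I do not expect a serious obstacle here; the only points requiring care are purely bookkeeping ones: justifying that the duality bracket $\langle\cdot,\varphi\rangle$ for a $C^\infty$-semimartingale is literally the integral $\int_0^T\!\!\int S(t,x)\varphi(t,x)\,dx\,dt$ so that classical integration by parts in $x$ is legitimate pathwise (no stochastic integration is involved since we differentiate in the space variable only), and confirming that the absence of boundary terms is guaranteed by the compact support of $\varphi$ in $[0,T)\times\mathbb{R}$. The main conceptual input — that $\partial_x$ maps $\mathcal{D}_0$ into itself, indeed maps all of $\mathcal{D}$ into $\mathcal{D}_0$ — is already recorded in the Remark, so the proof is essentially a one-line integration by parts plus an induction.
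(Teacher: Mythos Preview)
Your proposal is correct and follows essentially the same argument as the paper: move $\partial_x$ onto the test function by integration by parts and observe that the resulting test function still lies in $\mathcal{D}_{0}([0,T)\times\mathbb{R})$. The only cosmetic difference is that the paper treats general $\alpha$ in one shot (writing a generic element of $\mathcal{D}_0$ as $\partial_x\varphi$ with $\varphi\in\mathcal{D}$ and integrating by parts $\alpha$ times), whereas you reach the same conclusion by inducting on $\alpha$.
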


\begin{proof}
\noindent By integration by parts and hypothesis,
\begin{eqnarray*}
\lim_{n\rightarrow\infty}<\partial_x^{\alpha}F_n -\partial_x^{\alpha}G_n, \partial_x \varphi> &
= & \lim_{n\rightarrow\infty}<F_n -G_n,\partial_x ( (-1)^{\alpha}  \partial_x^{\alpha}\varphi)> \\ & = & 0
\end{eqnarray*}
for all $\varphi\in  \mathcal{D}([0,T) \times \mathbb{R})$. Thus $ \partial_x^{\alpha}
[F_n]\approx \partial_x^{\alpha}[G_n]$. \eop

\end{proof}

\begin{remark} We would like to  remark that our definition of  the space of
renormalized semimartingales is inspired  by Colombeau's theory of nonlinear  generalized functions, see for instance  \cite{Biag}
and \cite{colb}. This theory has been successful in applications to stochastic analysis. We refer the reader to
  \cite{albe}, \cite{albe3},  \cite{CO1}, \cite{CO2}, \cite{CO3}, \cite{ober4} and \cite{Russo} for interesting works on this subject. \end{remark}

\noindent In this context we have the following relation of association between  renormalized semimartingales
and random generalized functions.

\begin{definition}\label{ass1} Let $[F_n]$ be renormalized semimartingales and $T$ be random generalized functions. We say
that $[F_n]$ is  associated with $T$, (denoted by $[F_n]
\simeq T$) if for all $\varphi \in \mathcal{D}_{0}([0,T) \times \mathbb{R})$,
\[
\lim_{n\rightarrow\infty}<F_n,  \varphi>=<T, \ \varphi>.
\]
\end{definition}
\noindent We observe that the relation $ \simeq$ is well defined.
In fact, if $(C_n)\in \mathbf{N}$ and $\varphi \in \mathcal{D}_{0}([0,T) \times \mathbb{R})$ we have that
$\lim_{n\rightarrow\infty}<C_n ,  \varphi>=0$.

\section{Solving the KPZ Equation}

\subsection{Solution  of the  KPZ equation in $\mathbf{D}$}
\noindent The distribution theory of L. Schwartz is the linear calculus of today. It is a cornerstone of modern mathematical analysis with many
applications in economics, engineering and natural sciences. However, it is not possible to define a product for arbitrary distributions with good
properties, see \cite{colb}, \cite{ober2} and  \cite{Schw2}.




\noindent A possible approach to define a product of a pair of
distributions is via regularization. That is, approximate one (or both) of the distributions to be multiplied by smooth functions,
multiply this approximation by the other distribution, and pass to
the limit (see for instance \cite{Miku},  \cite{calo},  \cite{ober2} and \cite{Trio}).  Inspired by  ideas of regularization and passage to the corresponding limit, we
introduce a new concept of solution for the KPZ equation.


\begin{definition}\label{solu3}
We say that $H\in  \mathbf{D}$
is a {\it renormalized-generalized  solution} of the equation  (\ref{KPZ})
if, almost surely, 
\begin{enumerate}
\item There exists a sequence of $C^{\infty}$-semimartingales $\{ H_n : n \in \mathbb{N} \}$ such that
$ H=\lim_{n \rightarrow \infty}H_n$
and there exists
$\lim_{n\rightarrow\infty}   (\partial_x H_n)^{2}$
in $\mathcal{D}^{\prime}_{0}([0,T) \times  \mathbb{R})$.

\item For all $\varphi \in \mathcal{D}_{0}([0,T) \times  \mathbb{R})$,
\[
< H, \partial_t\varphi > +   < \triangle H, \varphi >  +  < (\partial_x H )^{2},  \varphi >  +  \int_{0}^{T}  \varphi(t,\cdot) dW_t=0
\]
where $(\partial_x H )^{2}:=\lim_{n\rightarrow\infty} (\partial_x H_n)^{2}$.

\item The section of $H$ at $t=0$ is equal to $f$.
\end{enumerate}

\end{definition}

\begin{theorem}\label{teoSolu2} Let  $f\in  C_{b}^{\infty}(\mathbb{R})$ and $Z$ be a solution of the stochastic heat equation (\ref{heat}). Then $H=ln~Z$
 is a renormalized-generalized solution of (\ref{KPZ}).
\end{theorem}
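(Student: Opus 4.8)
The plan is to run the Cole--Hopf transformation at a regularized level and then pass to the limit, the whole point being that the divergent counterterm produced by It\^o's formula is constant in $x$ and is therefore annihilated by test functions in $\mathcal{D}_{0}$. First observe that $Z$ is a.s. a continuous, strictly positive function of $(t,x)$ (regularity and strict positivity of the stochastic heat equation), so $H=\ln Z$ is a.s. a continuous function and hence defines an element of $\mathbf{D}$; $\langle H,\varphi\rangle$ is a random variable for every $\varphi\in\mathcal{D}_{0}([0,T)\times\mathbb{R})$. Now fix $n$ and let $Z_{n}$ solve the mollified heat equation $dZ_{n}=\triangle Z_{n}\,dt+Z_{n}\,dW^{n}$ with $Z_{n}(0,\cdot)=e^{f}$. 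Since $W^{n}$ is smooth in $x$, the theory of linear SPDEs with spatially regular noise (cf. \cite{Ku}) shows $Z_{n}$ is a strictly positive $C^{\infty}$-semimartingale, and the classical stability theory of the stochastic heat equation gives $Z_{n}\to Z$, locally uniformly in $(t,x)$, almost surely (passing to a subsequence if necessary). Set $H_{n}:=\ln Z_{n}$, again a $C^{\infty}$-semimartingale, with $H_{n}(0,\cdot)=f$.

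Next I would apply It\^o's formula to $H_{n}=\ln Z_{n}$ for each fixed $x$. The martingale part of $Z_{n}(\cdot,x)$ is $\int_{0}^{\cdot}Z_{n}\,dW^{n}$, so by the quadratic-variation identity (\ref{reguCy1}) one has $d\langle Z_{n}(x)\rangle_{t}=Z_{n}^{2}\,Cn\,dt$, and
\[
dH_{n}=\frac{\triangle Z_{n}}{Z_{n}}\,dt+dW^{n}-\frac{Cn}{2}\,dt .
\]
Differentiating $\partial_{x}H_{n}=\partial_{x}Z_{n}/Z_{n}$ gives the elementary identity $\triangle Z_{n}/Z_{n}=\triangle H_{n}+(\partial_{x}H_{n})^{2}$, which turns the above into the regularized KPZ equation
\[
dH_{n}=\triangle H_{n}\,dt+(\partial_{x}H_{n})^{2}\,dt+dW^{n}-\frac{Cn}{2}\,dt .
\]
The crucial feature is that the divergent counterterm $Cn/2$ does not depend on $x$: when this equation is tested against any $\varphi\in\mathcal{D}_{0}([0,T)\times\mathbb{R})$, that term contributes $\tfrac{Cn}{2}\int_{0}^{T}\!\int\varphi(t,x)\,dx\,dt=0$. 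This is precisely how the formal $-\infty$ of (\ref{KPZI}) is absorbed into the geometry of the test space $\mathcal{D}_{0}$.

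I would then write the weak form of the regularized equation: integrating by parts in $t$ (using $\varphi(T,\cdot)=0$) and then in $x$, for $\varphi\in\mathcal{D}_{0}([0,T)\times\mathbb{R})$,
\[
\langle H_{n},\partial_{t}\varphi\rangle+\langle\triangle H_{n},\varphi\rangle+\langle(\partial_{x}H_{n})^{2},\varphi\rangle+\int_{0}^{T}\!\!\int_{\mathbb{R}}\varphi(t,x)\,dW^{n}_{t}(x)\,dx=-\langle f,\varphi(0,\cdot)\rangle ,
\]
the counterterm having dropped out as above. Now let $n\to\infty$. Local uniform convergence $Z_{n}\to Z$ together with strict positivity of $Z$ yields $H_{n}=\ln Z_{n}\to H=\ln Z$ in $\mathcal{D}'_{0}([0,T)\times\mathbb{R})$, hence $\langle H_{n},\partial_{t}\varphi\rangle\to\langle H,\partial_{t}\varphi\rangle$ and $\langle\triangle H_{n},\varphi\rangle=\langle H_{n},\triangle\varphi\rangle\to\langle\triangle H,\varphi\rangle$; and by stochastic Fubini $\int_{0}^{T}\!\int\varphi(t,x)\,dW^{n}_{t}(x)\,dx$ equals $\int_{0}^{T}(\delta_{n}\ast\varphi(t,\cdot))\,dW_{t}$ in the sense of (\ref{inteCy}), which converges to $\int_{0}^{T}\varphi(t,\cdot)\,dW_{t}$ by the It\^o isometry since $\delta_{n}\ast\varphi(t,\cdot)\to\varphi(t,\cdot)$ in $L^{2}(\mathbb{R})$ uniformly in $t$. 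Therefore $\langle(\partial_{x}H_{n})^{2},\varphi\rangle$ converges for every $\varphi\in\mathcal{D}_{0}$ to a limit, which defines $(\partial_{x}H)^{2}\in\mathcal{D}'_{0}([0,T)\times\mathbb{R})$ and gives Condition 1; passing to the limit in the displayed identity (for $\varphi$ vanishing at $t=0$) gives Condition 2.

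Finally, for Condition 3 I would verify, via the strict-delta-net characterization of the section, that for every $\varphi\in\mathcal{D}_{0}(\mathbb{R})$ and every strict delta net $\{\rho_{\varepsilon}\}$ one has $\lim_{\varepsilon\to0}\langle H,\rho_{\varepsilon}\varphi\rangle=\langle f,\varphi\rangle$. For each fixed $n$ this holds with $H_{n}$ in place of $H$ because $H_{n}(0,\cdot)=f$ and $H_{n}$ is continuous in $t$; the content is to interchange this with the limit $n\to\infty$, which requires a modulus-of-continuity estimate near $t=0$ that is uniform in $n$ (for instance, equicontinuity of $t\mapsto\langle H_{n}(t,\cdot),\varphi\rangle$ at $0$, or an $L^{2}(\Omega)$ bound extracted from the regularized equation). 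I expect this double-limit step, together with securing the a.s.\ local uniform convergence $Z_{n}\to Z$ and transferring it to $\ln Z_{n}\to\ln Z$ (which rests on strict positivity of $Z$), to be the main technical obstacle; the It\^o computation and the cancellation of the counterterm are the conceptual heart but are otherwise routine.
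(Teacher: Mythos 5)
Your treatment of Conditions 1 and 2 follows essentially the same route as the paper: mollify the noise, apply It\^o's formula to $H_n=\ln Z_n$ to produce the regularized KPZ equation with the constant-in-$x$ counterterm $\tfrac{Cn}{2}t$, observe that this term is annihilated by test functions in $\mathcal{D}_0$, pass to the limit in the linear terms using the convergence $Z_n\to Z$ uniformly on compacts (this is Bertini--Cancrini, Theorem 2.2, which you should cite rather than invoke ``classical stability theory''; note their convergence is on compacts of the \emph{open} set $(0,T)\times\mathbb{R}$), handle the stochastic integral exactly as in the paper (mollified integrand converging in $L^2$, It\^o isometry / continuity of $\varphi\mapsto\int_0^T\varphi\,dW_t$), and then read off the existence of $\lim_n (\partial_x H_n)^2$ from the equation itself, which is how the nonlinearity is defined. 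Up to your (more careful) bookkeeping of the boundary term at $t=0$ in the weak formulation, this is the paper's argument.

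The gap is in Condition 3. You reduce the section at $t=0$ to a double-limit interchange (``a modulus-of-continuity estimate near $t=0$ uniform in $n$'') and leave that estimate unproved, flagging it as the main obstacle. No such uniform-in-$n$ control is needed, and trying to extract it from the approximating family is the wrong place to look: the section is a statement about the limit object $H=\ln Z$ alone. The paper settles it directly by invoking the known regularity of the stochastic heat equation with bounded initial data: $Z$ is continuous on $[0,T]\times\mathbb{R}$ up to time zero (Chen--Dalang; see also Pospisil--Tribe and Shiga) and strictly positive, with $Z(0,\cdot)=e^{f}$; hence $H=\ln Z$ is continuous up to $t=0$ with $H(0,\cdot)=f$, and for any $\varphi\in\mathcal{D}_0(\mathbb{R})$ and any strict delta net $\{\rho_\varepsilon\}$ the limit
\[
\lim_{\varepsilon\to 0}\int\!\!\int_0^T \ln Z(t,x)\,\rho_\varepsilon(t)\,\varphi(x)\,dt\,dx=\int f(x)\varphi(x)\,dx
\]
follows from continuity at $t=0$ together with the defining properties of a strict delta net (supports shrinking to $\{0\}$, unit mass, uniformly bounded $L^1$ norms) and dominated convergence on the compact support of $\varphi$. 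So your step 3 as written is incomplete; replacing the uniform-in-$n$ equicontinuity scheme by this direct argument on $Z$ closes it and recovers the paper's proof.
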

\begin{proof}  Let us denote by $H_{n}(t,x)$ the process $ln \  Z_{n}(t,x)$, where $Z_n$ is the solution of the regularized stochastic heat equation in
the It\^o sense
\begin{equation}\label{heat2}
 \left \{
\begin{array}{lll}
dZ_{n}  & = & \triangle Z_{n} \ dt  + Z_{n} \ dW^{n}, \  \\
Z_n(0,x) & = &  e^{f(x)}.
\end{array}
\right .
\end{equation}
See \cite{BC} for an exhaustive study of the solutions of this equation.


\noindent By It\^o formula and (\ref{reguCy1}),

\begin{equation}\label{KPZA2}
H_{n} =   f  +
 \int_{0}^{t}  \triangle H_{n} \ ds
+   \int_{0}^{t} (\partial_x H_{n})^{2} \ ds
+     W_{t}^{n} - \frac{ C}{2} \ nt .
\end{equation}

\noindent Multiplying (\ref{KPZA2}) by $\partial_t \varphi(t,x)$, where $\varphi \in \mathcal{D}_{0}([0,T) \times  \mathbb{R})$,
and integrating in $[0,T)\times \mathbb{R}$ we obtain that
\begin{equation}\label{KPZA3}
< H_{n}, \partial_t \varphi > +
   < \triangle H_{n}, \varphi > +  < (\partial_x H_{n})^{2},  \varphi >
+    \int_{0}^{T} ( \varphi(t,\cdot) * \delta_{n} )  \  dW_{t} =0  .
\end{equation}

\noindent We observe that   $ Z_{n}$ converge to  $Z$    uniformly on compacts of $(0,T) \times \mathbb{R}$, see  L. Bertini and
N. Cancrini \cite{BC}, Theorem 2.2. Thus

  \begin{equation}\label{C1}
 \lim_{n \rightarrow \infty}< H_{n}, \partial_t \varphi> = < H, \partial_t \varphi>
   \end{equation}
and
  \begin{equation}\label{C2}
\lim_{n \rightarrow \infty}< \triangle H_{n}, \varphi> =   < \triangle H, \varphi >.
\end{equation}

\noindent We recall that $\int_{0}^{T}   \varphi(t,\cdot) \ dW_{t}$ defines a continuous linear functional from $\mathcal{D}([0,T) \times \mathbb{R})$ to
$\mathbb{R}$, see \cite{Sch}. Then

\begin{equation}\label{C3}
\lim_{n \rightarrow \infty}  \int_{0}^{T} (\varphi(t,\cdot) * \delta_{n})  \  dW_{t} = \int_{0}^{T}   \varphi \ dW_{t}.
\end{equation}

\noindent  From the equation (\ref{KPZA3})
and the  convergences   (\ref{C1}), (\ref{C2}) and (\ref{C3}) we have that for all $ \varphi\in \mathcal{D}_{0}([0,T) \times \mathbb{R})$,

\[
 \int_{0}^{T} \int_{ \mathbb{R}}   (\partial_x H_{n})^{2} \  \varphi(t,x)  \ dtdx \
\]
converges and define a linear functional in $\mathcal{D}_{0}^{\prime}([0,T) \times \mathbb{R})$ . Thus the  nonlinearity
\[
<(\partial_x H )^{2} , \varphi> := \lim_{n \rightarrow \infty} \int_{0}^{T} \int_{ \mathbb{R}}   (\partial_x H_{n})^{2} \  \varphi(t,x)  \ dtdx \
\]
is well defined.

Since the initial data is bounded we have the
continuity

\noindent Since the initial data is bounded we have the
continuity  of $ Z(t,x) $ in $[0,T] \times \mathbb{R}$, see Theorem 3.1 of 
\cite{LeDalang} and references  \cite{Prop} and \cite{Shiga}. Thus 

\[
\lim_{\varepsilon\rightarrow 0} \int \int_{0}^{T} ln \  Z(t,x) \  \rho_{\varepsilon}(t) \ dt \   \varphi(x) \ dx   =    \int f(x) \  \varphi(x)\ dx   ,
\]

 \noindent  for all $\varphi \in \mathcal{D}_{0}( \mathbb{R})$ and  for all strict delta nets. Thus  we conclude that $H$ is a renormalized-generalized  solution   for the  problem (\ref{KPZ}).

\end{proof}

\begin{remark}
The solution $Z(t,x)$ of the stochastic heat equation (\ref{heat}) is understood in a mild sense (see for example \cite{BC}, \cite{Da} and \cite{TESS}).
\end{remark}

\subsection{Solution  of the  KPZ equation in $\mathbf{G}$}

\begin{definition}\label{solu1}
We say that $F\in \mathbf{G}$
is a {\it renormalized  solution} of the KPZ-equation  (\ref{KPZ})
if
\[
F =   f
+ \ \int_{0}^{t}  \triangle F \ ds
+  \ \int_{0}^{t} (\partial_x F)^{2} \ ds
+  \  \mathcal{W}
\]
where the equality holds in $ \mathbf{G}$.

\end{definition}

\begin{theorem}\label{teoSolu} For every   $f\in  C_{b}^{\infty}(\mathbb{R})$
there exist a renormalized solution $F$ in $ \mathbf{G}$
 for the KPZ-equation (\ref{KPZ}).
\end{theorem}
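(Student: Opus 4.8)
The plan is to realize the candidate solution as the sequence $F := [H_n]$ where $H_n = \ln Z_n$ and $Z_n$ solves the regularized stochastic heat equation (\ref{heat2}), exactly as in the proof of Theorem \ref{teoSolu2}; the whole point is that in $\mathbf{G}$ the divergent counterterm $-\frac{C}{2}nt$ is swept into the ideal $\mathbf{N}$, so no renormalization subtraction survives in the quotient. First I would record that each $Z_n$ is a strictly positive $C^\infty$-semimartingale (by \cite{BC}), so $H_n = \ln Z_n$ is a well-defined element of $\mathbf{S}$ and hence $(H_n)_n \in \mathbf{S}^{\mathbb{N}_0}$ defines a class $F = [H_n] \in \mathbf{G}$. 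Then I would invoke the It\^o formula together with the quadratic variation identity (\ref{reguCy1}), which gives precisely
\[
H_n = f + \int_0^t \triangle H_n\, ds + \int_0^t (\partial_x H_n)^2\, ds + W^n_t - \frac{C}{2} n t
\]
for each $n$, an equality holding pathwise in $\mathbf{S}$.

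The core step is then purely algebraic: pass this identity to the quotient $\mathbf{G} = \mathbf{S}^{\mathbb{N}_0}/\mathbf{N}$. The sequence $\bigl(\frac{C}{2} n t\bigr)_n$ has each term independent of $x$ (almost surely), so it lies in $\mathbf{N}$ and its class is $0$ in $\mathbf{G}$. By Proposition \ref{deriva}, $\partial_x F = [\partial_x H_n]$, so $(\partial_x F)^2 = [(\partial_x H_n)^2]$ by the definition of the product in $\mathbf{S}^{\mathbb{N}_0}$; the Laplacian term and the time-integrals commute with taking classes for the same reason (the operations on $\mathbf{S}^{\mathbb{N}_0}$ are defined componentwise and $\mathbf{N}$ is a differential subalgebra closed under them, which I would check — differentiation in $x$ preserves $x$-independence, integration in $t$ preserves it, multiplication by the $x$-independent sequence $(C_n)$ stays in $\mathbf{N}$). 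Finally $\mathcal{W} = [W^n]$ by the Example. Assembling these, the $\mathbf{S}^{\mathbb{N}_0}$-identity above becomes, modulo $\mathbf{N}$,
\[
F = f + \int_0^t \triangle F\, ds + \int_0^t (\partial_x F)^2\, ds + \mathcal{W}
\]
in $\mathbf{G}$, which is exactly Definition \ref{solu1}.

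The one point needing genuine care — the main (though modest) obstacle — is verifying that every operation appearing in Definition \ref{solu1} is well-defined on the quotient, i.e.\ that $\mathbf{N}$ is stable under $\partial_x$, under $f \mapsto \int_0^t f\,ds$ applied componentwise, under the Laplacian $\triangle = \partial_x^2$, and under squaring in the algebra sense; this is the content of the remark after the definition of $\mathbf{G}$ that $\mathbf{N}$ is a subalgebra, extended to show it is a differential ideal for the relevant operations, together with the observation that the time-integral of an $x$-independent semimartingale is again $x$-independent. Once this bookkeeping is in place the theorem follows immediately, with no further analysis and in particular with no need for the convergence estimates of \cite{BC} that were essential in Theorem \ref{teoSolu2}: the strength of working in $\mathbf{G}$ rather than in $\mathbf{D}$ is precisely that the equation holds at the level of sequences and the infinite constant disappears by the algebraic quotient alone. \eop
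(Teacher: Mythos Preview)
Your proposal is correct and takes essentially the same approach as the paper: set $F = [\ln Z_n]$ with $Z_n$ solving the regularized heat equation (\ref{heat2}), apply It\^o's formula at the level of representatives to obtain (\ref{KPZA}), and pass to the quotient $\mathbf{G}$, where the divergent term $\frac{C}{2}nt$ lies in $\mathbf{N}$ and hence vanishes. The paper's checks (a) and (b) correspond exactly to your ``main obstacle'' of showing the operations descend to $\mathbf{G}$; the one sharpening worth making explicit is that $(\partial_x F)^2$ is well-defined not merely because $\mathbf{N}$ is closed under products (that alone would be insufficient, since $\mathbf{N}$ is not an ideal), but because $\partial_x$ \emph{annihilates} $\mathbf{N}$ outright --- as in the proof of Proposition~\ref{deriva} --- so the representative sequence of $\partial_x F$ is uniquely determined before one squares.
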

\begin{proof}
\noindent  Let $Z_n$ be the solution of the regularized stochastic heat equation (\ref{heat2}). We denote $ln \  Z_{n}(t,x)$ by $F_{n}(t,x)$
and $[F_{n}]$ by $F$. We claim that $F$ is a renormalized solution of the  KPZ equation. In fact,  by the It\^o formula we have that
\begin{equation}\label{KPZA}
F_{n} =   f
+ \ \int_{0}^{t}  \triangle F_{n} \ ds
+  \ \int_{0}^{t} (\partial_x F_{n})^{2} \ ds
+  \   W_{t}^{n} - \frac{C}{2} \ n t.
\end{equation}

\noindent Taking equivalence classes, we obtain

\[
F =   f
+ \ [ \int_{0}^{t}  \triangle F_{n} \ ds ]
+  \ [\int_{0}^{t} (\partial_x F_{n})^{2} \ ds ]
+  \  \mathcal{W}.
\]

\noindent We need to check  that

\begin{enumerate}
\item[a)]   \ $ [\int_{0}^{t} (\partial_x F_{n})^{2} \ ds ]= \int_{0}^{t} (\partial_x [F_{n}])^{2} \ ds $

\item[b)] \ $[ \int_{0}^{t}  \triangle F_{n} \ ds ]=  \int_{0}^{t}  \triangle \ [ F_{n}] \ ds $.

\end{enumerate}

\noindent $\mathrm{a)}$ We observe  that
\[
\int_{0}^{t} (\partial_x F_{n})^{2} \ ds + D_{n} - \int_{0}^{t} (\partial_x ( F_{n}+ C_{n})  )^{2} \ ds
\]
is equal to
\[
\int_{0}^{t} (\partial_x F_{n})^{2} \ ds + D_{n} - \int_{0}^{t} (\partial_x  F_{n}  )^{2} \ ds=D_{n}.
\]

\noindent $\mathrm{b)}$ The  proof is analogous to  $\mathrm{a)}$.
\end{proof}

\begin{remark} We observe from the proof of the Theorem \ref{teoSolu}   that   $F=[\ln Z_{n}]$ is a renormalized
solution of the KPZ equation. Moreover, we have that for all  $\varphi \in \mathcal{D}_{0}([0,T) \times \mathbb{R})$ it holds
\[
\lim_{n\rightarrow\infty}<\ln Z_{n},  \varphi>=<\ln Z, \ \varphi>.
\]
\noindent Hence we get $[\ln Z_n]
\simeq \ln Z$ in the sense of Definition
\ref{ass1}.
\end{remark}

\section{ End Comments and Future Work}

\subsection{ Others Approaches} Very recently,  two  other ways to make sense of the Cole-Hopf solution were given.

\noindent S. Assing, in   \cite{AS2}, introduces a weak solution, in a probabilistic sense,  for the conservative version of the  equation
(\ref{KPZ}). The idea is to  approximate the Cole-Hopf solution by the density fluctuations in weakly asymmetric exclusion.
In  \cite{Jara}, P. Gon\c{c}alves and M. Jara considered a similar type of solution. 

   \noindent M. Hairer,  in  \cite{Hairer}, obtained a complete existence and uniqueness result for KPZ.
In this remarkable paper he uses the theory of controlled rough paths to give meaning
to the nonlinearity and a careful analysis, via systematic use of Feynman diagrams, of
the series expansion of the solution.  He was able to show that the solution he identifies coincides with the Cole-Hopf solution.
 However, the results of \cite{Hairer}  are for KPZ on $[0,2\pi]$ . with periodic boundary conditions, and extending them to $\mathbb{R}$ remains an open problem.

\subsection{ Solution of the KPZ Equation for $d>1$.}

\noindent The stochastic heat equation
in $\mathbb{R}^{d}$,  is given by

\begin{equation}
 \left \{
\begin{array}{lll}
\partial_t Z  & = & \triangle Z   + Z \ W \  \\
Z(0,x) & = &  e^{f(x)}
\end{array}
\right.
\end{equation}
where $W(t,x)$ is the space-time white noise in $\mathbb{R}^d$.

\noindent We observe that  the Cole-Hopf solution
does not make sense because the solution of the  stochastic heat equation
is not a standard  stochastic process. It is
realized as a generalized stochastic process in the space of stochastic Hida distribution (see for example \cite{HOBZ}).

 \noindent Thus, we  expect solutions of  KPZ equation
  only in the sense of  Colombeau's generalized functions. Stochastic processes whose paths are generalized functions are considered in
\cite{albe} and \cite{albe3}, and are used for
solving some classes of nonlinear stochastic equations. For a new approach see
\cite{CO1}, \cite{CO2} and \cite{CO3}.


\end{document}